\newtheorem{theorem}{Theorem}
\newtheorem{prop}{Proposition}
\newtheorem{lemma}{Lemma}
\newtheorem{rem}{Remark}
\newtheorem{cor}{Corollary}
\newtheorem{exmp}{Example}
\begin{document}
\author{Mark Pankov}
\title{On extendability of permutations}
\subjclass[2010]{20B30, 20C30}
\address{Department of Mathematics and Computer Sciences, University of Warmia and Mazury, S{\l}oneczna 54, 10-710 Olsztyn, Poland}
\email{pankov@matman.uwm.edu.pl}

\maketitle

\begin{abstract}
Let $V$ be a left vector space over a division ring and let ${\mathcal P}(V)$ be the associated projective space.
We describe all finite subsets $X\subset V$ such that every permutation on $X$ can be extended to a linear automorphism of $V$
and all finite subsets ${\mathcal X}\subset {\mathcal P}(V)$
such that every permutation on ${\mathcal X}$ can be extended to an element of ${\rm PGL}(V)$.
Also, we reformulate the results in terms of linear and projective representations of symmetric groups.

\end{abstract}

\section{Introduction}
Let $V$ be a left vector space over a division ring $R$.
We suppose that $\dim V=n$ is finite and not less than $2$.
Denote by ${\mathcal P}(V)$ the associated projective space formed by $1$-dimensional subspaces of $V$.

Our first result (Theorem 1) is related to extendability of permutations on finite subsets of $V$ to
linear automorphisms of $V$:
if every permutation on a finite subset of $V$ can be extended to a linear automorphism of $V$ then
this subset is formed by linearly independent vectors or it consists of
$$x_{1},\dots,x_{m},-(x_{1}+\dots+x_{m}),$$
where $x_{1},\dots,x_{m}$ are linearly independent vectors.

Under the assumption that $R$ is a field, we determine all finite subsets ${\mathcal X}\subset {\mathcal P}(V)$
such that every permutation on ${\mathcal X}$ can be extended to an element of ${\rm PGL}(V)$.
Our second result (Theorem 2) states that there are precisely three distinct types of such subsets.

The main results (Theorems 1 and 2) will be reformulated in terms of linear and projective representations of symmetric groups
(Corollaries 1 and 2).

\section{Permutations on finite subsets of vector spaces}
Let $X$ be a finite subset of $V$ containing more than one vector.
Denote by $S(X)$ the group of all permutations on $X$.
We want to determine all cases when every element of ${\rm S}(X)$ can be extended to a linear automorphism of $V$.
This is possible, for example, if $X$ is formed by linearly independent vectors.

\begin{exmp}\label{exmp1}{\rm
Suppose that $X$ consists of linearly independent vectors $x_{1},\dots,x_{m}$ and
the vector
$$x_{m+1}=-(x_{1}+\dots+x_{m}).$$
For every $i\in \{1,\dots m-1\}$ we take any linear automorphism $u_{i}\in {\rm GL}(V)$ such that
$$u_{i}(x_{i})=x_{i+1},\;u_{i}(x_{i+1})=x_{i}\;\mbox{ and }\;u_{i}(x_{j})=x_{j}\;\mbox{ if }\;j\ne i,i+1,m+1.$$
Every $u_i$ sends $x_{m+1}$ to itself.
Consider a linear automorphism $v\in {\rm GL}(V)$ leaving fixed every $x_{i}$ for $i\le m-1$ and
transferring $x_{m}$ to $x_{m+1}$.
Then
$$v(x_{m+1})=-(v(x_{1})+\dots+v(x_{m}))=-(x_{1}+\dots+x_{m-1}-x_{1}-\dots-x_{m-1}-x_{m})=x_{m}.$$
So, all transpositions of type $(x_{i},x_{i+1})$ can be extended to linear automorphisms of $V$.
Since $S(X)$ is spanned by these transpositions,
every permutation on $X$ is extendable to a linear automorphism of $V$.
}\end{exmp}

\begin{theorem}\label{theorem1}
If every permutation on $X$ can be extended to a linear automorphism of $V$
then $X$ is formed by linearly independent vectors or it consists of
$$x_{1},\dots,x_{m},-(x_{1}+\dots+x_{m}),$$
where $x_{1},\dots,x_{m}$ are linearly independent.
\end{theorem}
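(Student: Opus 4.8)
The plan is to pin $X$ down by testing it against well-chosen transpositions and reading off the resulting linear relations. First I would record that $0\notin X$: if $0\in X$ then, since $|X|>1$, there is $z\in X$ with $z\ne 0$, and the transposition exchanging $0$ and $z$ cannot be extended because every linear map fixes $0$. Next, choose a maximal linearly independent subset $x_{1},\dots,x_{m}\subset X$; it spans $\langle X\rangle$, so every remaining (``extra'') vector $y\in X$ has a unique expansion $y=\sum_{i=1}^{m}c_{i}x_{i}$ with $c_{i}\in R$. If there are no extra vectors then $X=\{x_{1},\dots,x_{m}\}$ is linearly independent and we are done; the whole problem is therefore to control the extra vectors.

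The core computation treats one extra vector $y=\sum_{i}c_{i}x_{i}$ with support $S=\{i:c_{i}\ne 0\}$. For $j\in S$ I would apply the transposition $(x_{j}\;y)$, pick a linear extension $u\in{\rm GL}(V)$, and note that $u$ fixes every $x_{i}$ with $i\ne j$ while $u(x_{j})=y$ and $u(y)=x_{j}$. Expanding $x_{j}=u(y)=\sum_{i\ne j}c_{i}x_{i}+c_{j}y$, substituting $y=\sum_{i}c_{i}x_{i}$, and matching coefficients against the linearly independent $x_{1},\dots,x_{m}$ yields
\[
c_{j}^{2}=1\qquad\text{and}\qquad (c_{j}+1)\,c_{i}=0\ \ (i\ne j).
\]
Since $R$ has no zero divisors, $c_{j}^{2}=1$ forces $c_{j}=\pm1$; if $|S|\ge 2$ the second relation (taken with some $i\in S$, $i\ne j$) forces $c_{j}=-1$, while the case $|S|=1$ gives $y=\pm x_{j}$ with $c_{j}=1$ impossible as $y\ne x_{j}$. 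Hence every nonzero coefficient of $y$ equals $-1$, i.e. $y=-\sum_{i\in S}x_{i}$.

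It then remains to show that there is exactly one extra vector and that its support is full. For uniqueness, two distinct extra vectors $y\ne y'$ would be swapped by a transposition fixing all of $x_{1},\dots,x_{m}$; its extension then fixes a spanning set of $\langle X\rangle$, hence is the identity on $\langle X\rangle$, contradicting $u(y)=y'\ne y$. So $|X|\le m+1$. For the single extra vector $y=-\sum_{i\in S}x_{i}$, if some $\ell\notin S$ existed, the transposition $(x_{\ell}\;y)$ would extend to a $u$ fixing every $x_{i}$ with $i\ne\ell$, whence $u(y)=-\sum_{i\in S}x_{i}=y\ne x_{\ell}$, a contradiction; thus $S=\{1,\dots,m\}$ and $y=-(x_{1}+\dots+x_{m})$.

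The main thing to watch is that $R$ is a possibly noncommutative division ring, so one must keep scalars on the correct side and justify the manipulations $(c_{j}-1)(c_{j}+1)=c_{j}^{2}-1$ and $(c_{j}+1)c_{i}=c_{j}c_{i}+c_{i}$, which are legitimate since they involve only $c_{j}$, $c_{i}$ and the central scalars $\pm1$. A pleasant feature is that no separate treatment of characteristic $2$ is needed: there $-1=1$, the relation $(c_{j}+1)c_{i}=0$ becomes vacuous, but $c_{j}^{2}=1$ still gives $c_{j}=1=-1$, so the same transpositions settle every case. The degenerate possibility $m=1$ simply returns $X=\{x_{1},-x_{1}\}$, which is already the asserted form.
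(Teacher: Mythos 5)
Your proof is correct and follows essentially the same strategy as the paper's: expand each extra vector over a maximal linearly independent subset of $X$ and read off coefficient relations from extensions of suitable transpositions. The only organizational difference is that you extract both relations $c_j^{2}=1$ and $(1+c_j)c_i=0$ from the single transposition $(x_j\;y)$, whereas the paper first uses transpositions among $x_1,\dots,x_m$ to show all coefficients are equal and only then swaps a basis vector with the extra one; your version also makes explicit a few points the paper leaves implicit (that $0\notin X$, the uniqueness of the extra vector, and the fullness of its support).
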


\begin{proof}
Let $x_{1},\dots,x_{k}$ be the elements of $X$.
Suppose that these vectors are not linearly independent
and consider any maximal collection of linearly independent vectors from $X$.
We can assume that this collection is formed by $x_{1},\dots,x_{m}$, $m<k$.
Then every $x_{p}$ with $p>m$ is a linear combination of $x_{1},\dots,x_{m}$, i.e.
$x_{p}=\sum^{m}_{l=1}a_{l}x_{l}$.
Let $u\in {\rm GL}(V)$ be an extension of the transposition $(x_{i},x_{j})$, $i,j\le m$.
Then
$$u(x_{i})=x_{j},\;u(x_{j})=x_{i}\;\mbox{ and }\;u(x_{l})=x_{l}\;\mbox{ if }\;l\ne i,j.$$
We have
$$\sum^{m}_{l=1}a_{l}x_{l}=x_{p}=u(x_{p})=\sum^{m}_{l=1}b_{l}x_{l},\;\mbox{ where }\;
b_{i}=a_{j},\;b_{j}=a_{i}\;\mbox{ and }\;b_{l}=a_{l}\;\mbox{ if }\;l\ne i,j.$$
Since $x_{1},\dots,x_{m}$ are linearly independent, the latter means that $a_{i}=a_{j}$.
This equality holds for any $i,j\le m$ and we have
$$x_{p}=a(x_{1}+\dots+x_{m})$$
for some non-zero scalar $a\in R$.
Let $v\in {\rm GL}(V)$ be an extension of the transposition $(x_{1},x_{p})$.
Then
$$v(x_{1})=x_{p},\;v(x_{p})=x_{1}\;\mbox{ and }\;v(x_{i})=x_{i}\;\mbox{ if }\;i\ne 1,p$$
We have
$$x_{1}=v(x_{p})=a(v(x_{1})+\dots+v(x_{m}))=a(x_{p}+x_{2}+\dots+x_{m})=$$
$$=a^{2}(x_{1}+\dots+ x_{m})+a(x_{2}+\dots+x_{m})=a^{2}x_{1}+(a^{2}+a)(x_{2}+\dots+x_{m}).$$
Hence $a^{2}=1$ and $a^{2}+a=0$ which implies that $a=-1$ and
$$x_{p}=-(x_{1}+\dots+x_{m}).$$
This equality holds for every $p>m$.
Therefore, $k=m+1$ and the second possibility is realized.
\end{proof}

Let $X$ be a finite subset of $V$ such that every permutation on $X$ can be extended to a linear automorphism of $V$.
Suppose that $|X|\ge 2$ and $\langle X\rangle=V$.
Then for every $s\in S(X)$ there is the unique extension $\alpha_{X}(s)\in {\rm GL}(V)$.
If $s,t\in S(X)$ then $\alpha_{X}(st)$ and $\alpha_{X}(s)\alpha_{X}(t)$ both are extensions of $st$
which guarantees that
$$\alpha_{X}(st)=\alpha_{X}(s)\alpha_{X}(t).$$
Thus $\alpha_{X}$ is a monomorphism of $S(X)$ to ${\rm GL}(V)$
(it is clear that the kernel of $\alpha_{X}$ is trivial).
The image of $\alpha_{X}$ will be denoted $G(X)$.

\begin{cor}\label{cor1}
Let $G$ be a subgroup of ${\rm GL}(V)$ isomorphic to ${\rm S}_{m}$.
Let also $X$ be an orbit of $G$ such that $G$ acts faithfully on $X$ and $|X|=m$
\footnote{If $X$ is an orbit of $G$ and $G$ acts faithfully on $X$ then $|X|\ge m$.}. 
Suppose that there are not proper $G$-invariant subspaces of $V$.
Then the following assertions are fulfilled:
\begin{enumerate}
\item[$\bullet$]
$X$ is a base of $V$ or $X=\{x_{1},\dots,x_{n},-(x_{1}+\dots+x_{n})\}$,
where $x_{1},\dots,x_{n}$ form a base of $V$;
\item[$\bullet$]
$G=G(X)$.
\end{enumerate}
\end{cor}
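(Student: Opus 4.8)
The plan is to deduce the corollary from Theorem~\ref{theorem1} after observing that $G$ induces on $X$ the \emph{full} symmetric group. The action of $G$ on $X$ yields a homomorphism $\varphi\colon G\to {\rm S}(X)$, which is injective because $G$ acts faithfully on $X$. Since $G\cong {\rm S}_{m}$ and $|X|=m$, both groups have order $m!$, so the injection $\varphi$ is in fact an isomorphism. Hence every permutation on $X$ is realized by some element of $G\subseteq {\rm GL}(V)$; in particular, every permutation on $X$ extends to a linear automorphism of $V$. Theorem~\ref{theorem1} then guarantees that $X$ is either linearly independent or consists of linearly independent vectors $y_{1},\dots,y_{r}$ together with $-(y_{1}+\dots+y_{r})$.

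Next I would invoke the irreducibility hypothesis to fix the cardinalities. As $X$ is an orbit, its span $\langle X\rangle$ is $G$-invariant; it is nonzero because $0\notin X$ (the orbit of $0$ is the singleton $\{0\}$, whereas $|X|=m\ge 2$). Since $V$ has no proper $G$-invariant subspace, $\langle X\rangle=V$. In the first case this exhibits the linearly independent set $X$ as a base of $V$. In the second case $\langle X\rangle=\langle y_{1},\dots,y_{r}\rangle$ has dimension $r$, so $r=n$ and, writing $x_{i}=y_{i}$, we obtain $X=\{x_{1},\dots,x_{n},-(x_{1}+\dots+x_{n})\}$ with $x_{1},\dots,x_{n}$ a base. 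This is the first assertion.

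For the equality $G=G(X)$ I would use that $\langle X\rangle=V$ forces each permutation of $X$ to have a unique linear extension; thus $\alpha_{X}$ is well defined and $G(X)$ is its image. Every $g\in G$ restricts to a permutation $s_{g}\in {\rm S}(X)$, and by uniqueness of extensions $g=\alpha_{X}(s_{g})$, so $G\subseteq G(X)$. Conversely, given $s\in {\rm S}(X)$, the surjectivity of $\varphi$ yields $g\in G$ with $s_{g}=s$, and uniqueness again gives $\alpha_{X}(s)=g\in G$; hence $G(X)\subseteq G$.

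The one step carrying the real content is the opening reduction: recognizing that the combination of faithfulness with the \emph{minimal} orbit size $|X|=m$ (cf.\ the footnote) upgrades the containment $G\hookrightarrow {\rm S}(X)$ to an isomorphism. This is precisely what makes every permutation on $X$ extendable and thereby opens the door to Theorem~\ref{theorem1}; the remaining arguments are routine applications of irreducibility and of the uniqueness of linear extensions.
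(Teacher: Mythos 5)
Your proposal is correct and follows essentially the same route as the paper: use faithfulness plus the order count $|G|=m!=|S(X)|$ to see that $G$ realizes every permutation of $X$, apply Theorem~\ref{theorem1}, use the absence of proper $G$-invariant subspaces to get $\langle X\rangle=V$, and then identify $G$ with $G(X)$ via uniqueness of extensions. The extra details you supply (that $0\notin X$ and the two inclusions for $G=G(X)$) are harmless elaborations of the paper's shorter statement that $r^{-1}=\alpha_{X}$.
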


\begin{proof}
Let $r$ be the homomorphism of $G$ to $S(X)$ transferring every $g\in G$ to $g|_{X}$.
Since the action of $G$ on $X$ is faithful, $r$ is a monomorphism.
It follows from our assumptions that
$G$ and $S(X)$ have the same number of elements.
Thus $r$ is an isomorphism which means that every permutation on $X$ can be extended to a linear automorphism of $V$.
Since $\langle X\rangle$ is $G$-invariant, we have $\langle X\rangle=V$.
Then $r^{-1}=\alpha_{X}$ and $G=G(X)$.
\end{proof}

\section{Permutations on finite subsets of projective spaces}

Let ${\mathcal X}$ be a finite subset of ${\mathcal P}(V)$ containing more than one element.
Denote by $S({\mathcal X})$ the group of all permutations on ${\mathcal X}$.
In this section we determine all cases when every element of $S({\mathcal X})$ can be extended to an element of ${\rm PGL}(V)$
(if $R$ is a field).

Recall that the group ${\rm PGL}(V)$ is formed by the transformations of ${\mathcal P}(V)$ induced by linear automorphisms of $V$.
Let $\pi$ be the natural homomorphism of ${\rm GL}(V)$ to ${\rm PGL}(V)$.
The kernel of $\pi$ consists of all homotheties $x\to ax$, where $a$ belongs to the center of $R$, i.e.
two linear automorphisms of $V$ induce the same element of ${\rm PGL}(V)$ if and only if one of them is a scalar multiple of the other.

We say that $P_{1},\dots,P_{m}\in {\mathcal P}(V)$ form an {\it independent} subset if
non-zero vectors
$x_{1}\in P_{1},\dots,x_{m}\in P_{m}$
are linearly independent.
Every permutation on an independent subset can be extended to an element of ${\rm PGL}(V)$.

Let $m\in \{2,\dots,n\}$.
An $(m+1)$-element subset ${\mathcal X}\subset {\mathcal P}(V)$ is called an $m$-{\it simplex}
if it is not independent and every $m$-element subset of ${\mathcal X}$ is independent.
For example, if $x_{1},\dots,x_{m}\in V$ are linearly independent and
$a_{1},\dots,a_{m}\in R$ are non-zero then
$$\langle x_{1}\rangle,\dots, \langle x_{m}\rangle\;\mbox{ and }\;\langle a_{1}x_{1}+\dots+a_{m}x_{m}\rangle$$
form an $m$-simplex.
Conversely, if $\{P_{1},\dots,P_{m+1}\}$ is an $m$-simplex then
there exist linearly independent vectors
$$x_{1}\in P_{1}\setminus\{0\},\dots,x_{m}\in P_{m}\setminus\{0\}\;
\mbox{ such that }\;
P_{m+1}=\langle x_{1}+\dots+x_{m}\rangle.$$
Every permutation on an $m$-simplex can be extended to an element of ${\rm PGL}(V)$
\cite[Section III.3, Proposition 1]{Baer}.

Following \cite[Section III.4, Remark 5]{Baer} we say that a subset ${\mathcal X}\subset {\mathcal P}(V)$ is {\it harmonic}
if there are linearly independent vectors $x,y\in V$ such that
$${\mathcal X}=\{\;\langle x \rangle,\langle y \rangle,\langle x+y\rangle,\langle x-y\rangle\;\}.$$

\begin{exmp}\label{exmp2}{\rm
Suppose that the characteristic of $R$ is equal to $3$ and ${\mathcal X}$ is the har\-mo\-nic subset
consisting of
$$P_{1}=\langle x \rangle,\;P_{2}=\langle y \rangle,\;P_{3}=\langle x+y\rangle,\;P_{4}=\langle x-y\rangle.$$
Consider $u_{1},u_{2},u_{3}\in {\rm GL}(V)$ satisfying the following conditions
$$\begin{array}{ll}
u_{1}(x)=y&u_{1}(y)=x,\\
u_{2}(x)=-x&u_{2}(y)=x+y,\\
u_{3}(x)=x&u_{3}(y)=-y.
\end{array}$$
Since the characteristic of $R$ is equal to $3$,  we have
$$u_{2}(x-y)=-x-(x+y)=-2x-y=x-y.$$
A direct verification shows  that every $\pi(u_{i})$
is an extension of the transposition $(P_{i},P_{i+1})$.
Since the group $S({\mathcal X})$ is spanned by all transpositions of type $(P_{i},P_{i+1})$,
every permutation on ${\mathcal X}$ can be extended to an element of  ${\rm PGL}(V)$.
}\end{exmp}

\begin{theorem}\label{theorem2}
Suppose that $R$ is a field.
If  every permutation on ${\mathcal X}$ can be extended to an element of ${\rm PGL}(V)$ then one of
the following possibilities is realized:
\begin{enumerate}
\item[$\bullet$] ${\mathcal X}$ is an independent subset;
\item[$\bullet$] ${\mathcal X}$ is an $m$-simplex, $m\in \{2,\dots,n\}$;
\item[$\bullet$] the characteristic of $R$ is equal to $3$ and ${\mathcal X}$ is a harmonic subset.
\end{enumerate}
\end{theorem}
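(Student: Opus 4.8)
The plan is to lift the projective configuration to $V$, fix a basis adapted to a maximal independent subset of $\mathcal{X}$, and then extract constraints by letting the (essentially unique) extension of each transposition act coordinatewise; the three cases will correspond to how many dependent points can coexist. First I would reduce to the spanning case: choose nonzero $x_{i}\in P_{i}$, and note that every extension permutes the $P_{i}$ and hence preserves $\langle\mathcal{X}\rangle$, so I may assume $\langle\mathcal{X}\rangle=V$ and set $m=\dim V$, $k=|\mathcal{X}|$. If the chosen vectors are linearly independent then $k=m$ and $\mathcal{X}$ is an independent subset, the first case. Otherwise $k>m$, and I fix a maximal independent subset, say $P_{1},\dots,P_{m}$ with $x_{1},\dots,x_{m}$ a basis, and write each dependent point as $x_{p}=\sum_{l}a_{p,l}x_{l}$ for $p>m$.

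The first real step is a \emph{full support} claim: every dependent point satisfies $a_{p,l}\neq 0$ for all $l\le m$. Indeed, if $a_{p,j}=0$ for some $j$, pick $i$ in the support of $x_{p}$ and extend the transposition $(P_{i},P_{j})$; the extension $u$ fixes $\langle x_{l}\rangle$ for $l\neq i,j$ and fixes $\langle x_{p}\rangle$, so it is diagonal on the basis with $u(x_{i})\in\langle x_{j}\rangle$, and then the $x_{j}$-coefficient of $u(x_{p})$ is a nonzero multiple of $a_{p,i}$, contradicting $u(x_{p})\in\langle x_{p}\rangle$ (whose $x_{j}$-coefficient is $0$). When $k=m+1$ there is a single dependent point, and full support is precisely the statement that every $m$-element subset of $\mathcal{X}$ is independent; so $\mathcal{X}$ is an $m$-simplex.

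The heart of the argument is to bound $k$. Suppose $k\ge m+2$, so there are two distinct dependent points $P_{p},P_{q}$, and extend a transposition $(P_{i},P_{j})$ of two basis points. The resulting diagonal $u$ fixes every $\langle x_{l}\rangle$ with $l\neq i,j$ and every dependent line; comparing coefficients and using full support, the diagonal scalar on each fixed $x_{l}$ must equal the scalar by which $u$ acts on $x_{p}$, and likewise on $x_{q}$. When $m\ge 3$ there is always an index $l\notin\{i,j\}$, so these two scalars coincide, and the two remaining ($x_{i}$- and $x_{j}$-) equations give $a_{p,i}/a_{p,j}=a_{q,i}/a_{q,j}$. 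Running this over all pairs $i,j$ forces the coordinate vectors of $P_{p}$ and $P_{q}$ to be proportional, i.e. $P_{p}=P_{q}$, a contradiction. Hence for $m\ge 3$ we get $k\le m+1$, and together with the previous step $\mathcal{X}$ is independent or an $m$-simplex.

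It remains to treat $m=2$, where the ``third coordinate'' used above is absent -- exactly why an extra point can survive. Since a nonidentity element of $\mathrm{PGL}(V)$ fixes at most two points of $\mathcal{P}(V)$ (its fixed points are eigenlines of a $2\times2$ matrix), a transposition fixing three of the $P_{i}$ would be the identity, so $k\le 4$. For $k=4$ I would normalize $P_{1}=\langle x\rangle$, $P_{2}=\langle y\rangle$, $P_{3}=\langle x+y\rangle$ and, by full support, $P_{4}=\langle x+cy\rangle$ with $c\neq 0,1$; requiring $(P_{1},P_{2})$ to extend (it must fix $P_{3}$ and $P_{4}$) forces $c=-1$, so $\mathcal{X}$ is harmonic, and then requiring $(P_{1},P_{3})$ to extend forces $2=-1$, i.e. characteristic $3$, recovering Example~\ref{exmp2}. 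The main obstacle I anticipate is the scalar bookkeeping in the bound on $k$: making precise how a third basis direction rigidifies the dependent points and collapses them to one when $m\ge 3$, while its absence at $m=2$ lets the harmonic quadruple appear and simultaneously pins the characteristic to $3$.
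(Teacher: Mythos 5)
Your proof is correct, and although its skeleton --- pass to a maximal independent subset $P_{1},\dots,P_{m}$, prove the full-support claim, conclude that $k=m+1$ gives an $m$-simplex --- coincides with the paper's, the step that bounds $k$ is genuinely different. The paper's pivot is the transposition $(P_{m+1},P_{p})$ of two \emph{dependent} points: lifting it by Lemma \ref{lemma1} so that $x_{1}+\cdots+x_{m}\mapsto x_{p}$ forces $v(x_{i})=b_{i}x_{i}$, applying $v$ once more gives $b_{1}^{2}=\cdots=b_{m}^{2}$, hence $x_{p}=b(\varepsilon_{1}x_{1}+\cdots+\varepsilon_{m}x_{m})$ with $\varepsilon_{i}=\pm 1$, and a basis transposition $(P_{q},P_{q+1})$ then eliminates everything except $q=1$, $m=2$. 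You never swap two dependent points: for $m\ge 3$ an arbitrary lift $u$ of a basis transposition $(P_{i},P_{j})$ is diagonal off $\{i,j\}$, full support forces each diagonal scalar $c_{l}$ ($l\ne i,j$) to equal the eigenvalue $d_{p}$ of $u$ on every dependent line, a spare third index gives $d_{p}=d_{q}$, and the $x_{i}$- and $x_{j}$-coefficients yield $a_{p,i}/a_{p,j}=a_{q,i}/a_{q,j}$ for all pairs, i.e. $P_{p}=P_{q}$ --- so $k\le m+1$ outright. The leftover case $m=2$ is capped at $k\le 4$ by the fixed-point count (a non-identity element of ${\rm PGL}$ of a plane fixes at most two points of the projective line, valid since $R$ is a field), an observation not used in the paper, and your harmonic/characteristic-$3$ endgame agrees with the paper's final computation. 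What your route buys is that the dependent points are treated symmetrically and no normalized lift (Lemma \ref{lemma1}) is needed for the main bound; what the paper's buys is a single computation covering all $m\ge 2$ at once, at the price of the extra case analysis on the sign pattern.
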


\begin{lemma}\label{lemma1}
Suppose that $R$ is a field.
Let $f$ be an element of ${\rm PGL}(V)$ transferring $P\in {\mathcal P}(V)$ to $Q\in {\mathcal P}(V)$.
For any non-zero vectors $x\in P$ and $y\in Q$ there exists $u\in {\rm GL}(V)$
such that $\pi(u)=f$ and $u(x)=y$.
\end{lemma}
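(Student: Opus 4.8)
The plan is to start with an arbitrary preimage of $f$ under $\pi$ and then correct it by a suitable homothety so that $x$ is sent exactly to $y$. First I would pick some $u_{0}\in {\rm GL}(V)$ with $\pi(u_{0})=f$; such a $u_{0}$ exists because $\pi$ is surjective. The aim is to produce the required $u$ as a scalar multiple of $u_{0}$.

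Next I would determine where $u_{0}$ sends $x$. Since $\pi(u_{0})=f$ and $f(P)=Q$, the automorphism $u_{0}$ maps the line $P=\langle x\rangle$ onto the line $Q=\langle y\rangle$. As $x\ne 0$ and $u_{0}$ is injective, $u_{0}(x)$ is a non-zero vector of $Q$, so $u_{0}(x)=cy$ for some non-zero scalar $c\in R$.

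Finally I would set $u$ to be the composition of $u_{0}$ with the homothety $w\mapsto c^{-1}w$, so that $u(x)=c^{-1}(u_{0}(x))=c^{-1}(cy)=(c^{-1}c)y=y$, as required. It then remains only to verify that $\pi(u)=f$, and this is precisely where the hypothesis that $R$ is a field enters: over a field every non-zero scalar is central, hence the homothety $w\mapsto c^{-1}w$ lies in $\ker\pi$, and therefore $\pi(u)=\pi(u_{0})=f$.

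I expect no serious obstacle here; the single point that deserves care is the last verification, and it is exactly the place where the field assumption is indispensable. For a general division ring the correcting homothety $w\mapsto c^{-1}w$ need not be $R$-linear (it is merely semilinear) and so need not belong to $\ker\pi$; requiring $R$ to be a field guarantees $c^{-1}$ is central, so the correction keeps us inside $\ker\pi$ and leaves the projective transformation unchanged. Everything else is routine.
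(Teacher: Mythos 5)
Your argument is correct and is essentially identical to the paper's proof: both take an arbitrary lift $v$ of $f$, write $v(x)=ay$, and replace $v$ by $a^{-1}v$. Your closing observation about why commutativity of $R$ is needed matches the remark the paper places immediately after the lemma.
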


\begin{proof}
We take any $v\in {\rm GL}(V)$ such that $\pi(v)=f$. Then $v(x)=ay$ and the linear automorphism
$u:=a^{-1}v$ is as required.
\end{proof}

\begin{rem}{\rm
If $R$ is non-commutative then
a scalar multiple of a linear mapping is linear only in the case when the scalar belongs to the center of $R$.
}\end{rem}

\begin{proof}[Proof of Theorem \ref{theorem2}]
Let $P_{1},\dots,P_{k}$ be the elements of ${\mathcal X}$.
If ${\mathcal X}$ is not independent then we take any maximal independent subset in ${\mathcal X}$.
Suppose that it is formed by $P_{1},\dots,P_{m}$, $k<m$ and
consider $P_{p}$ with $p>m$. Every non-zero vector $y\in P_{p}$ is a linear combination of
non-zero vectors $y_{1}\in P_{1},\dots,y_{m}\in P_{m}$.
If this linear combination contains $y_{i}$ and does not contain $y_{j}$ for some $i,j\le m$ then
an element of ${\rm PGL}(V)$ extending the transposition $(P_{i},P_{j})$
does not leave fixed $P_{p}$ which is impossible.
This means that
$$y=a_{1}y_{1}+\dots+a_{m}y_{m},$$
where all $a_{1},\dots,a_{m}\in R$ are non-zero.

Thus $P_{1},\dots,P_{m}$ and $P_{p}$ form an $m$-simplex for every $p>m$.
If ${\mathcal X}$ consists of $m+1$ elements, i.e. $k=m+1$, then ${\mathcal X}$ is an $m$-simplex.
Consider the case when $k\ge m+2$.

We choose non-zero vectors $x_{1}\in P_{1},\dots,x_{m}\in P_{m}$ such that
$$x_{m+1}:=x_{1}+\dots+x_{m}\in P_{m+1}.$$
If $p\ge m+2$ then
$$P_{p}=\langle x_{p}\rangle,\;\mbox{ where }\; x_{p}= b_{1}x_{1}+\dots+b_{m}x_{m}$$
and all $b_{1},\dots,b_{m}\in R$ are non-zero.
Let $v$ be a linear  automorphism of $V$ such that $\pi(v)$ is an extension of the transposition $(P_{m+1},P_{p})$.
By Lemma \ref{lemma1}, we can suppose that $v$ sends $x_{m+1}$ to $x_{p}$.
Since $x_{1},\dots,x_{m}$ are linearly independent and $v(P_{i})=P_{i}$ for every $i\le m$,
the equality
$$v(x_{1})+\dots+ v(x_{m})=v(x_{m+1})=x_{p}=b_{1}x_{1}+\dots+b_{m}x_{m}$$
shows that $v(x_{i})=b_{i}x_{i}$ for every $i\le m$.
Then
$$v(x_{p})=b_{1}v(x_{1})+\dots+b_{m}v(x_{m})=b^{2}_{1}x_{1}+\dots+b^{2}_{m}x_{m}\in P_{m+1}$$
which means that $b^{2}_{1}=b^{2}_{2}=\dots=b^{2}_{m}$
and $b_{i}=\pm b_{j}$ for any $i,j\le m$. In other words,
$$x_{p}=b(\varepsilon_{1}x_{1}+\dots+\varepsilon_{m}x_{m}),$$
where $\varepsilon_{i}=\pm 1$ for every $i\in \{1,\dots,m\}$.
Since $x_{m+1}$ and $x_{p}$ are linearly independent, 
$\varepsilon_{i}\ne\varepsilon_{j}$ for some pairs $i,j\le m$. 
This guarantees that the characteristic of $R$ is not equal to $2$
and we can assume that
$$P_{p}=\langle x_{p}\rangle,\; \mbox{ where }\;
x_{p}=x_{1}+\dots+x_{q}-x_{q+1}-\dots-x_{m}
$$
and $1\le q<m$.

Now consider a linear automorphism $u\in {\rm GL}(V)$ such that $\pi(u)$ is an extension of the transposition $(P_{q},P_{q+1})$.
Then
$$u(P_{q})=P_{q+1},\;u(P_{q+1})=P_{q}\;\mbox{ and }\;u(P_{i})=P_{i}\;\mbox{ if }\;i\ne q,q+1.$$
By Lemma \ref{lemma1}, we suppose that $u$ leaves fixed $x_{m+1}$.
Since $x_{1},\dots,x_{m}$ are linearly independent,
the equality
$$u(x_{1})+\dots+ u(x_{m})=u(x_{m+1})=x_{m+1}=x_{1}+\dots+ x_{m}$$
implies that
$$u(x_{q})=x_{q+1},\;u(x_{q+1})=x_{q}\;\mbox{ and }\;u(x_{i})=x_{i}\;\mbox{ if }\;i\ne q,q+1,\;i\le m.$$
Then
$$u(x_{p})=u(x_{1})+\dots+u(x_{q})-u(x_{q+1})-\dots-u(x_{m})$$
belongs to $P_{p}$ only in the case when  $q=1$ and $m=2$, i.e.
$P_{p}=\langle x_{1}-x_{2}\rangle$ for every $p\ge 4$.
The latter means that ${\mathcal X}$ is the harmonic subset consisting of
$$P_{1}=\langle x_{1} \rangle,\;P_{2}=\langle x_{2} \rangle,\;P_{3}=\langle x_{1}+x_{2}\rangle,
\;P_{4}=\langle x_{1}-x_{2}\rangle.$$

Let $w$ be a linear automorphism of $V$ such that $\pi(w)$ is an extension of the transposition $(P_{1},P_{3})$
and $w(x_{1})=x_{1}+x_{2}$ (see Lemma \ref{lemma1}).
Since $w(P_{2})=P_{2}$ and $w(P_{3})=P_{1}$, we have
$$w(x_{1}+x_{2})=w(x_{1})+w(x_{2})=(x_{1}+x_{2})+cx_{2}\in P_{1}.$$
Then $c=-1$ and $w(x_{2})=-x_{2}$.
The equality $w(P_{4})=P_{4}$ implies that
$$w(x_{1}-x_{2})=w(x_{1})-w(x_{2})=(x_{1}+x_{2})+x_{2}=x_{1}+2x_{2}\in P_{4}.$$
Hence $x_{1}+2x_{2}=x_{1}-x_{2}$ and $2=-1$, i.e. the characteristic of $R$ is equal to $3$.
\end{proof}

Every representation $\alpha: S_{m}\to {\rm GL}(V)$ induces the projective representation
$\pi\alpha:S_{m}\to {\rm PGL}(V)$.
By  \cite{Schur}, there exist projective representations of symmetric groups
which are not induced by linear representations
(an explicit realization of such representations can be found in \cite{Nazarov}).
Now we establish an analogue of Corollary \ref{cor1} for projective representations of $S_{m}$.

Let ${\mathcal X}$ be a subset of ${\mathcal P}(V)$ such that
every permutation on ${\mathcal X}$ can be extended to an element of ${\rm PGL}(V)$.
Suppose that $|{\mathcal X}|\ge 2$ and there is not a proper subspace of $V$ containing every element of ${\mathcal X}$.
The following example shows that an extension of a permutation on ${\mathcal X}$
to an element of ${\rm PGL}(V)$ is not unique if ${\mathcal X}$ is a maximal independent subset
(an independent subset consisting of $n$ elements).

\begin{exmp}{\rm
Let $x_{1},\dots,x_{n}$ be a base of $V$
and let $a_{1},\dots,a_{n}$ be distinct non-zero scalars.
Consider the linear automorphism of $V$ transferring every $x_{i}$ to $a_{i}x_{i}$.
The associated element of ${\rm PGL}(V)$ is non-trivial, but it induces the identity permutation on the set consisting of
$\langle x_{1}\rangle ,\dots,\langle x_{n}\rangle$.
}\end{exmp}

\begin{prop}\label{prop1}
Let $\{P_{1},\dots,P_{n+1}\}$ and $\{P'_{1},\dots,P'_{n+1}\}$ be $n$-simplices in ${\mathcal P}(V)$.
The following two conditions are equivalent:
\begin{enumerate}
\item[$\bullet$] $R$ is a field,
\item[$\bullet$] there is a unique element of ${\rm PGL}(V)$ transferring every $P_{i}$ to $P'_{i}$.
\end{enumerate}
\end{prop}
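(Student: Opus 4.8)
The plan is to separate the two implications into an existence statement and a uniqueness statement, and to note that existence holds over an arbitrary division ring so that the entire content of the proposition is uniqueness. For existence I would use the normal form of an $n$-simplex recalled before the statement: since $\dim V=n$, I can pick linearly independent $x_{1}\in P_{1},\dots,x_{n}\in P_{n}$ (thus a base of $V$) with $x_{1}+\dots+x_{n}\in P_{n+1}$, and likewise $x'_{1}\in P'_{1},\dots,x'_{n}\in P'_{n}$ with $x'_{1}+\dots+x'_{n}\in P'_{n+1}$. The linear automorphism $u$ determined by $u(x_{i})=x'_{i}$ sends $x_{1}+\dots+x_{n}$ to $x'_{1}+\dots+x'_{n}$, so $\pi(u)$ carries every $P_{i}$ to $P'_{i}$. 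Hence an element of ${\rm PGL}(V)$ with the required property always exists, regardless of commutativity.

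Next I would reduce uniqueness to a computation of the pointwise stabilizer of a single $n$-simplex. If $f,g\in {\rm PGL}(V)$ both send $P_{i}$ to $P'_{i}$ for all $i$, then $g^{-1}f$ fixes every $P_{i}$; therefore the extension is unique if and only if the only element of ${\rm PGL}(V)$ fixing each of $P_{1},\dots,P_{n+1}$ is the identity. This decouples the question from the target simplex $\{P'_{i}\}$ entirely, so the answer will depend only on $R$.

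To compute this stabilizer, let $\pi(w)$ fix each $P_{i}$ and use the base $x_{1},\dots,x_{n}$ with $P_{n+1}=\langle x_{1}+\dots+x_{n}\rangle$. Fixing $P_{i}$ forces $w(x_{i})=a_{i}x_{i}$, and fixing $P_{n+1}$ gives $a_{1}x_{1}+\dots+a_{n}x_{n}\in\langle x_{1}+\dots+x_{n}\rangle$; by linear independence all $a_{i}$ coincide with a common scalar $c$. Thus $w$ is the linear map $\sum b_{i}x_{i}\mapsto \sum b_{i}c\,x_{i}$, which is indeed left-linear and invertible for $c\ne 0$. The crucial point is then to decide when $\pi(w)$ is trivial, that is, by the description of $\ker\pi$, when $w$ is a central homothety: setting every $b_{i}=1$ in the identity $\sum b_{i}c\,x_{i}=\lambda\sum b_{i}x_{i}$ forces $\lambda=c$, and then $bc=cb$ for all $b\in R$, so $w$ is a homothety exactly when $c$ lies in the center of $R$.

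Finally I would read off the equivalence. The stabilizer is trivial precisely when every non-zero $c\in R$ is central, i.e. when $R$ is commutative, a commutative division ring being a field; combined with existence, this yields a unique extension. Conversely, if $R$ is non-commutative, any non-central $c$ produces a non-trivial $\pi(w)$ fixing all $P_{i}$, and composing it with one extension gives a second, so uniqueness fails. I expect the main obstacle to be exactly the non-commutative bookkeeping of the previous step: over a field the equalities $w(x_{i})=c\,x_{i}$ would immediately give $w=c\cdot{\rm id}$, but in general the map $\sum b_{i}x_{i}\mapsto\sum b_{i}c\,x_{i}$ is a genuine linear automorphism that fails to be a homothety when $c$ is non-central, and recognizing this non-trivial stabilizing transformation is what drives both directions.
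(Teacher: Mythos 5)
Your proof is correct and complete; the paper itself gives no argument here, deferring entirely to Baer (Section III.3), so your write-up supplies exactly the details the citation hides. The structure you use is the classical one: existence over an arbitrary division ring via the normal form $P_{n+1}=\langle x_{1}+\dots+x_{n}\rangle$ of an $n$-simplex, reduction of uniqueness to the triviality of the pointwise stabilizer of one simplex, and the computation showing that stabilizer is $\{\sum b_{i}x_{i}\mapsto\sum b_{i}c\,x_{i}: c\in R\setminus\{0\}\}$ modulo central homotheties. The two points that genuinely need care in the non-commutative setting --- that the map $\sum b_{i}x_{i}\mapsto\sum b_{i}c\,x_{i}$ is left-linear and invertible even for non-central $c$, and that $\ker\pi$ consists only of \emph{central} homotheties (as the paper's Remark after Lemma 1 emphasizes) --- are both handled correctly, and together they give the clean equivalence: the stabilizer is trivial iff every non-zero scalar is central iff $R$ is a field. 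What your version buys over the paper's is self-containedness and an explicit witness to non-uniqueness in the non-commutative case; what it does not change is the substance, since this is essentially the argument Baer gives for the ``fundamental theorem'' on frames of reference.
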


\begin{proof}
See \cite[Section III.3]{Baer}.
\end{proof}

If $R$ is a field and ${\mathcal X}=\{P_{1},\dots,P_{n+1}\}$ is an $n$-simplex then, by Proposition \ref{prop1},
for every permutation $s\in S({\mathcal X})$ there is the unique extension $\overline{s}\in {\rm PGL}(V)$.
This correspondence is a monomorphism of $S({\mathcal X})$ to ${\rm PGL}(V)$.
Its image will be denoted by $G({\mathcal X})$.
Note that $G({\mathcal X})=\pi(G(X))$, where $X$ is formed by vectors
$$x_{1}\in P_{1},\dots,x_{n}\in P_{n}\;\mbox{ and }\;-(x_{1}+\dots+x_{n})\in P_{n+1}.$$

Suppose that $R$ is a field of characteristic $3$ and ${\mathcal X}$ is a harmonic subset.
Then every $3$-element subset of ${\mathcal X}$ is a $2$-simplex and
Proposition \ref{prop1} guarantees that
every permutation on ${\mathcal X}$ is uniquely extendable to an element of ${\rm PGL}(V)$.
As above, we get a monomorphism of $S({\mathcal X})$ to ${\rm PGL}(V)$
and denote its image by $G({\mathcal X})$.

\begin{cor}\label{cor2}
Let $R$ be a field and let $G$ be a subgroup of ${\rm PGL}(V)$ isomorphic to $S_{m}$.
Let also ${\mathcal X}$ be an orbit of $G$ such that $G$ acts faithfully on ${\mathcal X}$ and $|{\mathcal X}|=m$
\footnote{As in Corollary \ref{cor1}, if ${\mathcal X}$ is an orbit of $G$ and $G$ acts faithfully on ${\mathcal X}$ then $|{\mathcal X}|\ge m$.}.
Suppose that there are not proper $G$-invariant subspaces of $V$
\footnote{A subspace $S\subset V$ is $G$-invariant if every element of $G$ transfers ${\mathcal P}(S)$ to itself.}.
Then the following assertions are fulfilled:
\begin{enumerate}
\item[$\bullet$] ${\mathcal X}$ is a maximal independent subset or an $n$-simplex or
${\mathcal X}$ is a harmonic subset and the characteristic of $R$ is equal to $3$;
\item[$\bullet$]
if ${\mathcal X}$ is not independent then $G=G({\mathcal X})$.
\end{enumerate}
\end{cor}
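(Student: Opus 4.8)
The plan is to mimic the proof of Corollary \ref{cor1}, replacing $V$ by ${\mathcal P}(V)$ and ${\rm GL}(V)$ by ${\rm PGL}(V)$, and then to use Theorem \ref{theorem2} together with the irreducibility hypothesis to single out the admissible configurations. First I would let $r\colon G\to S({\mathcal X})$ be the homomorphism sending $g$ to its restriction $g|_{\mathcal X}$; this is well defined because ${\mathcal X}$ is a $G$-orbit. Faithfulness of the action makes $r$ injective, and since $|{\mathcal X}|=m$ forces $|S({\mathcal X})|=m!=|S_m|=|G|$, the map $r$ is an isomorphism. Consequently every permutation on ${\mathcal X}$ extends to an element of ${\rm PGL}(V)$, namely its preimage under $r$, which lies in $G\subset{\rm PGL}(V)$. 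Because ${\mathcal X}$ is a $G$-orbit, the subspace $\langle{\mathcal X}\rangle$ is $G$-invariant, so the hypothesis on invariant subspaces gives $\langle{\mathcal X}\rangle=V$; equivalently, no proper subspace contains every element of ${\mathcal X}$.

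Next I would apply Theorem \ref{theorem2}: since $R$ is a field, ${\mathcal X}$ is independent, an $m'$-simplex for some $m'\in\{2,\dots,n\}$, or (in characteristic $3$) harmonic. The spanning condition $\langle{\mathcal X}\rangle=V$ then pins down the dimension in each case. If ${\mathcal X}$ is independent, its elements span an $|{\mathcal X}|$-dimensional subspace, hence $|{\mathcal X}|=n$ and ${\mathcal X}$ is a maximal independent subset. If ${\mathcal X}$ is an $m'$-simplex, the extra point is a combination of the other $m'$ independent ones, so $\langle{\mathcal X}\rangle$ has dimension $m'$; thus $m'=n$ and ${\mathcal X}$ is an $n$-simplex. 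If ${\mathcal X}$ is harmonic, its span is $2$-dimensional, forcing $n=2$. This establishes the first bullet.

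For the second bullet, assume ${\mathcal X}$ is not independent, so it is an $n$-simplex or a harmonic subset in characteristic $3$. In both situations every permutation on ${\mathcal X}$ has a \emph{unique} extension to ${\rm PGL}(V)$: directly by Proposition \ref{prop1} for the $n$-simplex, and for the harmonic case because $n=2$ and every $3$-element subset is then a $2$-simplex to which Proposition \ref{prop1} applies, exactly as recorded in the paragraph preceding the statement. I would then prove $G=G({\mathcal X})$ by double inclusion. Given $g\in G$, the element $g$ itself extends $g|_{\mathcal X}=r(g)$, so by uniqueness $g=\overline{r(g)}\in G({\mathcal X})$; conversely, any $s\in S({\mathcal X})$ equals $r(g)$ for some $g\in G$ since $r$ is onto, and $g$ extends $s$, whence $\overline{s}=g\in G$. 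Hence $G=G({\mathcal X})$.

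The main obstacle is the refinement in the second paragraph: translating the trichotomy of Theorem \ref{theorem2} into the sharper list by exploiting $\langle{\mathcal X}\rangle=V$ to fix $n$, together with checking that the harmonic configuration genuinely falls under the uniqueness supplied by Proposition \ref{prop1}. Once these are in place, the isomorphism $r$ and the inclusion argument for $G=G({\mathcal X})$ are routine.
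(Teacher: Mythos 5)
Your proposal is correct and follows exactly the route the paper intends: the paper gives no written proof beyond ``similar to the proof of Corollary \ref{cor1},'' and your argument is precisely that adaptation (restriction homomorphism $r$, cardinality forcing $r$ to be an isomorphism, $G$-invariance of $\langle{\mathcal X}\rangle$ giving $\langle{\mathcal X}\rangle=V$, then Theorem \ref{theorem2} refined by the spanning condition, and uniqueness via Proposition \ref{prop1} for $G=G({\mathcal X})$). You have in fact supplied more detail than the paper does, and all of it checks out.
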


The proof is similar to the proof of Corollary \ref{cor1}.

\end{document}